\def\RR{{\mathbb R}}    
\def\He{\normalfont{\texttt{He}}}
\newtheorem{theorem}{Theorem}
\newtheorem{hyp}{Assumption}
\newtheorem{remark}{Remark}
\title{Distributed Event-Triggered Leader-Follower Consensus of Nonlinear Multi-Agent Systems}
\author{Mathieu Marchand$^{1}$, Vincent Andrieu$^{2}$, Sylvain Bertrand$^{1}$ and H\'el\`ene Piet-Lahanier$^{1}$
	\thanks{*This work was funded by French grant ANR Delicio (ANR-19-CE23-0006).}%
    \thanks{$^{1}$M. Marchand, S. Bertrand and H. Piet-Lahanier are with Universit\' e Paris-Saclay, ONERA, Traitement de l\textquoteright information et syst\`emes, 91123, Palaiseau, France (e-mail: \{\texttt{mathieu.marchand}; \texttt{sylvain.bertrand}; \texttt{helene.piet-lahanier}\}\texttt{@onera.fr}).}%
    \thanks{$^{2}$V. Andrieu is with Universit́e Lyon,  CNRS, LAGEPP, Villeurbanne, France (e-mail: \texttt{vincent.andrieu@gmail.com}).}
}
\begin{document}

\maketitle
\begin{abstract}
We consider  the distributed leader-follower consensus problem with event-triggered communications. The system under consideration is a non-linear input-affine multi agent system. The agents are assumed to have identical dynamics structure with uncertain parameters and satisfying an incremental stabilisability condition. 
A distributed control law is proposed which achieves consensus based on two novel Communication Triggering Conditions (CTCs): the first one to achieve an asymptotic consensus but without any guarantees on Zeno behaviour and the second one to exclude Zeno behaviour but  with practical consensus. 
\end{abstract}

\section{Introduction}
As missions to be performed by autonomous systems become increasingly challenging, there is a growing interest in the use of Multi-Agent Systems (MAS) for many applications such as inspection/mapping/exploration with unmanned aerial vehicles or mobile robots, see e. g. \cite{olfati2007consensus}, \cite{verma2021multi}. The use of such systems allows to achieve complex or highly demanding tasks with agents with lower abilities through cooperation. 
MAS control may be either centralized or distributed. In the centralized case, control laws are designed on the basis of a global knowledge of the MAS which makes this approach very efficient. However, it is at the cost of being very sensitive to potential deficiency or loss of the central agent. 
In distributed control, each agent computes its control input according to local information (i.e.  from the agents located in its neighbourhood). 
At the cost of potential discrepancy of resulting performances, this strategy provides enhanced robustness to the loss of an agent.
Thus, distributed approaches have been increasingly employed, especially for cooperative MAS. This cooperation requires the definition of a communication strategy. 

Development of distributed control to make a MAS reach a consensus, i.e. a situation where the state vectors of the agents converge to an identical value, has received significant interest in the last decades. This problem can be split in two categories: leaderless consensus \cite{olfati2007consensus, zheng2017consensus} and leader-follower consensus \cite{zhang2015leader,cao2015leader,wu2018leader, giaccagli2021sufficient}. In these works, consensus schemes are based on either continuous or periodic exchanges of state values by agents within the same neighbourhood.

Continuous communications are intractable in practice and periodic ones are therefore preferred. Nevertheless they can be still energy consuming, and bandwidth problems may arise when dealing with networks composed of large number of agents.



In event-triggered communication \cite{tabuada2007event}, exchange of information between connected agents is decided by evaluating a Communication Triggering Condition (CTC) usually defined from a function of the current states and some information error.
%
Solutions to the consensus problem with event-triggered communication have been proposed for single and double integrator MAS, see e.g. \cite{seyboth2011control, seyboth2013event}. Then, the problem has also been studied for agents with general linear dynamics, as in \cite{garcia2014decentralized}. 
Several extensions have been suggeted to improve the previous result, e.g. as in \cite{viel2017new}, resulting in reduced communications.
 
Regarding nonlinear systems, 
various articles proposed distributed control laws and CTCs for MAS with Euler-Lagrange nonlinear dynamics, see e.g. \cite{huang2016distributed, liu2017event, viel2019}.
%
Distributed event-triggered control has also been investigated for agents with more general nonlinear dynamics. 
Solutions for reaching consensus with event-triggered control in MAS with nonlinear fully-actuated input-affine dynamics have been proposed in \cite{xie2015event,liu2017fixed,liu2018fixed}, or e.g. in \cite{liu2018fixed} regarding the finite-time event-triggered consensus problem.

For possibly fully actuated systems, an interesting approach has been presented in \cite{wang2019distributed} to solve an event-triggered MAS consensus problem with nonlinear input-affine dynamics. The problem is split into two simpler problems. First, the consensus problem of a virtual MAS with linear dynamics with the same graph topology as the real one is solved, considering an event-triggered mechanism. Then, the state of the virtual system can be used as reference for the tracking problem of agents of the real MAS with nonlinear dynamics. Nevertheless, the approach is restricted to  agents described by minimum phase dynamics.



In this paper, we consider the leader-follower consensus problem for nonlinear input-affine MAS. Contrary to aforementionned works, the dynamics considered here are neither assumed to be fully-actuated nor assumed to be described by minimum phase system.
Moreover, uncertainty on some parameters is considered in the system description. Two novel distributed event-triggered control schemes with associated CTCs are proposed.

The paper is organised as follows. In Section \ref{problem_statement}, the problem is defined. Main results are provided in Section \ref{main_results}. In Section \ref{result} the efficiency of the algorithms is analysed through an illustrative simulation example.

\noindent{\bf Notations.} Given a matrix $M\in\RR^{n\times m}$, we denote by $m_{ij}$ the scalar element in row $i$ and column $j$ of $M$.
The identity matrix is denoted by $I$. 
For any matrix $A$, $A^\top$ denotes its transpose matrix and, for square matrices, $\He\{A\} := A + A^\top$. 
A matrix $P$ satisfies $P>0 \;(\geq 0)$ if $P$ is symmetric and positive definite (semi-positive definite). 
The Kronecker product is denoted by $\otimes$. The largest and the smallest eigenvalue of $M$ are respectively denoted by $\lambda_{\max}(M)$ and $\lambda_{\min}(M)$. 
%
The sets of positive real numbers and non null positive real numbers are respectively denoted by $\RR_+$ and $\RR_+^*$
We also denote by $C^1$, the set of differentiable functions with continuous derivative.

\section{Problem Statement}
\label{problem_statement}
\subsection{System description}
In this paper, we consider the problem of synchronising a MAS composed by a leader agent with state $x_1$ in $\RR^n$ and $N-1$ identical followers with states $x_2,\dots,x_N$ also in $\RR^n$.
The leader's dynamics are described by
\begin{equation}\label{eq_DynLeader}
    \dot x_1(t) = f(x_1(t), \,\theta), 
\end{equation}
with $\theta \in \Omega \subset \RR^p$ a vector of  constant parameters, possibly unknown, and $f:\RR^n\times\Omega\mapsto\RR^n$ a $C^1$ vector field.
The followers' dynamics are described by
\begin{equation}
\label{dyna}
    \dot{x}_i(t) = f(x_i(t),\,\theta) + B\, u_i(t), \quad  \, i=2,\dots,N,
\end{equation}
where $x_i \in \RR^n$ is the state of the agent $i$, $u_i:t\mapsto \RR^m$ is the control input and $B$ is in $\RR^{n\times m}$.

The communication graph $\mathcal G$ between the agents is described by $\left\{ \mathcal V, \, \mathcal E, A\right\}$, where $\mathcal V = \{1,\, \dots\,, N\}$ denotes the set of vertices,  $\mathcal E\subset \{1,\dots,N\}^2$ denotes the set of edges  which represents the communication capability among agents and $A=\left(a_{ij}\right)_{(i,j)\in\{1,\dots,N\}^2}$ is the adjacency matrix which entries $a_{ij}\geq 0$ represent the weights of the communication capabilities.
We have $a_{ij}=0$ if $(i,j)\notin\mathcal E$ and $a_{ij}>0$ if $(i,j)\in\mathcal E$. If $(i,j) \in \mathcal E$, agent $j$ is a neighbour of agent $i$.   
We introduce the Laplacian matrix of the graph  denoted $L$ and defined as
\begin{equation}
    l_{ij} = -a_{ij}, \quad \text{for $i \neq j$}, \hspace{0.7cm} l_{ij} = \sum_{k = 1}^N a_{ik}, \quad  \text{for $i=j$}.
\end{equation}
where $l_{ij}$ is the $(i, \, j)$-th entry of $L$.

Our objective is to design an event-triggered distributed control law 
 in order to make the MAS asymptotically reach a consensus, i.e. to
make the consensus manifold $\mathcal D$ defined by
\begin{equation}
    \mathcal D = \left\{\mathbf x  = \text{col}(x_1,\,\dots,\,x_N) \in R^{nN}\, |\, x_1  = \dots = x_N \right\}\!,
\end{equation}
asymptotically stable along the solution of the complete dynamical system \eqref{eq_DynLeader}-\eqref{dyna}. We denote for all $\mathbf x \in \RR^{nN}$ its Euclidean distance to the set $\mathcal D$ by $\left|\mathbf x\right|_{\mathcal D}$.


\subsection{Limitation of communications among the MAS}

To reduce the communications, the agents can communicate only at some particular time instants, that will be defined by the CTC. 
When an agent communicates, it sends its current state value to all its neighbours.
Hence, for all  $j$ in $\mathcal V$, there exists a discrete sequence $t_{j,p}$ in $\RR_+$  which is the $p$-th instant of communication from agent $j$ to its neighbours.

\subsection{Structure of the distributed control law}

Inspired by \cite{garcia2014decentralized}, to each agent $i$ is attached an estimate of the state of its neighbours in the communication graph denoted $\hat x^i_j$ for all $(i,j)$ in $\mathcal E$.
For all $(i,j)$ in $\mathcal E$,
\begin{subequations} 
\label{estimator}
\begin{empheq}[left =\empheqlbrace]{align}
\label{dyna_estimator}
  {{\dot{\hat{x}}}}^i_j(t) &=   f({\hat{x}}^i_j(t),\, \hat\theta),  \forall t \in \left[t_{j,p}, t_{j,(p+1)} \right),\\
  \label{estimator_received}
  {\hat{x}}_j^i(t_{j,p}) &= x_j(t_{j,p}).
\end{empheq}
\end{subequations}
where 
$\hat \theta \in \Omega$ is an a-priori estimate of the vector of unknown parameters $\theta$, assumed to be available to all the agents.
%
We denote (skipping the time dependency of the solution)
$$
\hat {\mathbf x}^i = \text{col}\{\hat x_j^i \},\, \forall\, (i,j)\in\mathcal E 
, \quad
\mathbf x = \text{col}(x_1, \, \dots, \, x_N ).
$$
The distributed control law is then defined as
\begin{equation}
    u_i(t) = \phi_i(\hat {\mathbf x}^i(t)) ,
\end{equation}
where $\phi_i:\RR^{nN}\mapsto\RR^m$ is a 
function in the form\footnote{With a slight abuse of notations this function definition also incorporates agents $j$ which are not neighbours of $i$ since their associate  weights $l_{ij}$ are $0$.}
\begin{equation}
\label{phi}
\phi_i(\mathbf{\hat x}^i)= - \kappa \sum_{j = 1}^N l_{ij}\, \alpha(\hat x_j^i),
\end{equation}
with $\kappa \in \mathbb R$ and $\alpha: \mathbb R^n \rightarrow \mathbb R^m$, a to-be-designed function.
\begin{remark}
To limit the complexity of the estimators and the amount of exchanged information, it has been chosen not to use the control input $u_i$ in the estimators \eqref{estimator}. This corresponds to the open-loop estimation approach described in \cite{survey}.
\end{remark}

The following section is dedicated to selection of the communication instants 
$t_{j,p}$, the function $\alpha$ and the parameter $\kappa$ in order to obtain exponential convergence of the trajectories to the consensus manifold $\mathcal D$.

        

\section{Asymptotic Consensus}
\label{main_results}

\subsection{Additional assumptions}

To reach consensus in the MAS one needs to assume connectivity of the communication graph and a stabilisability property (see for instance \cite{olfati2007consensus}). 
These assumptions are detailed in this subsection.
\subsubsection{Graph connectivity}
\begin{hyp}
\label{hyp_lap}
The graph $\mathcal G$ is connected and undirected.
\end{hyp}
As a consequence, according to  Assumption \ref{hyp_lap}, the Laplacian matrix $L$ associated to the graph $\mathcal G$ can be partitioned as (see \cite{godsil2001algebraic})
\begin{equation}
     L = \begin{pmatrix}l_{11} & L_{12} \\ L_{21} & L_{22}\end{pmatrix}\!,
\end{equation}
where  $L_{22} = L_{22}^\top \in \RR^{{\left(N-1\right)}\times\left(N-1\right)}$ is a positive definite matrix satisfying:
\begin{equation}
\label{L_22}
    L_{22} \geq \mu \,  I,
\end{equation}
with $\mu>0$ and $l_{11}\in \RR$, $L_{21}$ and $L_{12}^\top$ in $\RR^{N-1}$.

\subsubsection{Stabilisability of individual dynamics}
The second assumption is related to a stabilisability property of each agents dynamics. 
\begin{hyp}
\label{hyp_CMF}
There exists a positive definite matrix $P \in \mathbb R^{n\times n}$ and  $q \in \mathbb R^*_+$ such that the following condition holds
\begin{equation}
    \label{CMF}
        \frac{\partial f}{\partial x}(x, \, \theta)^\top  P  + P\,\frac{\partial f}{\partial x}(x, \, \theta)  - \rho\,P\,B\,B^\top P \leq -q\, P,
    \end{equation}
with $\rho>0$ and for all $\theta \in \Omega$ and $x \in \mathbb R^n$. 
\end{hyp}
In the particular case in which $f(x,\theta)=Fx$ for some matrix $F$ (i.e. in the linear framework), equation \eqref{CMF}  is a Riccati equation in the form
$$
        F^\top P  + P\,F  - \rho\,P\,B\,B^\top P \leq -q\, P,
$$
which has been employed in many control design to achieve consensus (see for instance \cite{garcia2014decentralized}).
Hence, \eqref{CMF} is a nonlinear extension of the Ricatti equation. It has been employed to achieve consensus for instance in \cite{andrieu2018some} when no restriction of communication is imposed.

Obtaining a matrix $P$ which satisfies \eqref{CMF} is not an easy task. Indeed, it depends on the coordinates in which is written the system dynamics.
For instance, in the particular case in which the system is feedback linearizable, then this assumption is trivially satisfied. Also, in  \cite{giaccagli2021sufficient2}, the authors suggest several  methods to write equation \eqref{CMF} as a Linear Matrix Inequality (LMI) which might be easily checked. It relies on the decomposition of $f$ as a linear system to which is added an incremental sector bounded nonlinearity. 
This type of assumption has also been employed in \cite{zhang2014fully}.


\subsection{Asymptotic event-triggered consensus}

As in \cite{garcia2014decentralized}, aditionnaly to the estimators \eqref{estimator} of the states of its neighbours, each agent of the MAS also maintains an estimate of its own state corresponding to the current estimate employed by its neighbours to compute their control law. Hence, we introduce for $i=1,\dots,N$,
\begin{subequations} 
\label{estimatori}
\begin{empheq}[left =\empheqlbrace]{align}
\label{dyna_estimatori}
  {{\dot{\hat{x}}}}^i_i(t) &=   f({\hat{x}}^i_i(t),\, \hat\theta),  \forall t \in \left[t_{i,p}, t_{i,(p+1)} \right),\\
  {\hat{x}}_i^i(t_{i,p}) &= x_i(t_{i,p}).
\end{empheq}
\end{subequations}
where $\hat{\theta} \in \Omega$ is the same a priori estimate of the vector of unknown parameters $\theta$ used in \eqref{estimator}.

For $i=1,\dots, N$, let $e_i= x_i - \hat x_i^i$ be the error between $x_i$ and $\hat x_i$, and $r_i = x_i -x_1$ the state error between the agent $i$ and the leader (note that $r_1=0$).



Consider the following CTC defining the sequence of communication instants $t_{i,\,p}$ for all $i =\{1,...,N\} $
\begin{align}\label{SequenceofTime2}
        t_{i,\, p+1} &= \inf\!\left\{\!s\!>t_{i,\,p} \, | \, \delta_i(s) -  \sigma_i\, w_i(s)^\top  \Theta_i \,  w_i(s)> 0 \right\}\!,
\end{align}
with $0< \sigma_i <1$,
\begin{equation}
\label{wi}
     w_i =  \sum_{j=1}^N  l_{ij} \left(\hat{x}_j^i - \hat{x}_i^i \right)\!,
\end{equation}
and
\begin{align}
\label{delta}
     \delta_i &=e_i^\top  S_i \,e_i+  \left| w_i^\top   R_i\, e_i\right|, 
\end{align}
with 
\begin{align}
     R_i &= 2\,\kappa\,P\, B\, B^\top P,\\
     \label{Theta}
     \Theta_i &= 2\,\kappa_2\, \varepsilon\left(1 -2\, l_{ii}\, b_i \right) P\, B\, B^\top P,\\
    \nonumber  S_i &= \left[\left(2\,\kappa\,l_{ii}\, b_i+\frac{2\,\kappa\, l_{ii}}{ b_i}+\kappa_2\,\varepsilon\left( \frac{4\, l_{ii}}{ b_i} \right.\right.\right.\\
    &\hspace{1.5cm} \left. \left.\left.  - N\, M_i\left(\frac{ b_i}{2} + \frac{1}{2\, b_i} \right)\right)\right)P\,B\,B^\top P\right]\!.
\end{align}
where $0<  b_i < \frac{1}{2\,{l_{ii}}}$, $ M_i=\sum_{j=1}^N {l}_{ij}^2$ and $\varepsilon \leq \frac{1}{\lambda_{\max}(L)}$, with $\kappa = \kappa_1 + \kappa_2$, $\kappa_1$ and $\kappa_2$ in $\RR_+$.

Using the following theorem, one can define the distributed event-triggered control law ensuring exponential asymptotic consensus of the MAS. 
\begin{theorem}
\label{theorem}
Suppose Assumption \ref{hyp_lap} and \ref{hyp_CMF} hold.
Then with the distributed control law  $u_i = \phi_i(\mathbf{\hat{x}}^i)$ for $i = 2,\dots,N$, defined in \eqref{estimator}-\eqref{phi}
with $\kappa = \kappa_1 + \kappa_2$, for any $\kappa_1 > \frac{\rho}{2\,\mu}$ and $\kappa_2 > 0$, $\alpha$ defined as 
\begin{equation}
\label{alpha}
    \alpha(x) = B^\top P \,x,
\end{equation}
 $\rho$ satisfying \eqref{CMF}, $\mu$ satisfying \eqref{L_22} 
and  $t_{i,\,p}$ defined according to the CTC \eqref{SequenceofTime2}, 
%
%
%
%
there exist positive constants $k_1$ and $k_2 > 0$ such that for any $\mathbf x(0)$ in $\RR^{Nn}$ and for all $t$ in the time domain of definition of the solution
\begin{equation}
\left|\mathbf x(t)\right|_{\mathcal D} \leq k_1 \exp(-k_2\, t)\left|\mathbf x(0)\right|_{\mathcal D}.
\end{equation}
where $\mathbf x(t)$ denotes the solution of the system \eqref{eq_DynLeader}-\eqref{dyna}.
\end{theorem}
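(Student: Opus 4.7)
I would work with the quadratic Lyapunov function
\begin{equation*}
V(\tilde{\mathbf r}) = \sum_{i=2}^N r_i^\top P r_i,\qquad r_i = x_i - x_1,\quad \tilde{\mathbf r}=\mathrm{col}(r_2,\ldots,r_N),
\end{equation*}
with $P$ from Assumption~\ref{hyp_CMF}. Since $|\mathbf x|_{\mathcal D}$ is equivalent to $|\tilde{\mathbf r}|$ and $V$ is sandwiched between constant multiples of $|\tilde{\mathbf r}|^2$, an estimate of the form $\dot V \leq -2k_2 V$ along trajectories will yield the claim with $k_1$ determined by the condition number of $P$ and the norm-equivalence constant.

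Differentiating $V$ and using the mean-value decomposition $f(x_i,\theta)-f(x_1,\theta)=A_i(t)r_i$, with $A_i(t)=\int_0^1 \frac{\partial f}{\partial x}(x_1+s r_i,\theta)\,ds$, and applying Assumption~\ref{hyp_CMF} pointwise in $s$, I would obtain
\begin{equation*}
\dot V \leq -qV + \rho\sum_{i=2}^N r_i^\top Q r_i + 2\sum_{i=2}^N r_i^\top PB u_i,\qquad Q:=PBB^\top P.
\end{equation*}
I would then substitute $u_i=-\kappa B^\top P w_i$ and use the key observation that, because the estimators \eqref{estimator} and \eqref{estimatori} share the same a-priori $\hat\theta$, the same dynamics and the same reset times, all estimates of a given agent $j$ coincide: $\hat x_j^i = x_j - e_j$ uniformly in $i$. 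Consequently $w_i = \sum_{j=2}^N l_{ij} r_j - \sum_{j=1}^N l_{ij} e_j$, and writing $\kappa = \kappa_1+\kappa_2$ and invoking $L_{22}\geq \mu I$ one arrives at
\begin{equation*}
\dot V \leq -qV + (\rho-2\kappa_1\mu)\sum_i r_i^\top Q r_i - 2\kappa_2 \tilde{\mathbf r}^\top(L_{22}\otimes Q)\tilde{\mathbf r} + 2\kappa \sum_{i=2}^N r_i^\top Q\!\left(\textstyle\sum_{j=1}^N l_{ij} e_j\right).
\end{equation*}
The choice $\kappa_1 > \rho/(2\mu)$ renders the second term non-positive, while the $\kappa_2$-Laplacian term is the negative margin available to dominate the $e$-cross term.

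The core of the proof is to rewrite the cross term in the exact format of the CTC \eqref{SequenceofTime2}. I would split $\sum_j l_{ij} e_j = l_{ii} e_i + \sum_{j\neq i} l_{ij} e_j$, apply Young's inequality with weight $b_i$ on the self term and with a weight proportional to $\varepsilon/M_i$ on each off-neighbour term, and trade the resulting squares of $r_i$ back against $\tilde{\mathbf r}^\top(L_{22}\otimes Q)\tilde{\mathbf r}$ using $\varepsilon \leq 1/\lambda_{\max}(L)$. Re-expressing remaining $r$-factors through the identity $\sum_j l_{ij} r_j = w_i + \sum_j l_{ij} e_j$, the leftover quadratic forms regroup exactly as $e_i^\top S_i e_i$, $|w_i^\top R_i e_i|$ and $-\sigma_i w_i^\top \Theta_i w_i$ with $S_i, R_i, \Theta_i$ as in \eqref{delta}--\eqref{Theta}; the restriction $b_i < 1/(2 l_{ii})$ is precisely what makes the coefficient $1-2 l_{ii} b_i$ appearing in $\Theta_i$ positive. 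The net effect is
\begin{equation*}
\dot V \leq -qV + \sum_{i=1}^N \bigl[\delta_i - \sigma_i w_i^\top \Theta_i w_i\bigr].
\end{equation*}
The CTC \eqref{SequenceofTime2} forces each bracket to be non-positive between events, and continuity together with $e_i(t_{i,p})=0$ extends the bound to the communication instants, so $\dot V \leq -qV$ on the whole time domain of the solution; Grönwall's lemma then gives the exponential estimate with $k_2 = q/2$.

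The hard part is this last algebraic step: every coefficient of $S_i$, $R_i$ and $\Theta_i$ must emerge from a single consistent chain of Young's inequalities, and any slack would destroy the exact match between the residual quadratic form and the CTC threshold. Once the identification is achieved, the remainder is standard Lyapunov analysis; note that no Zeno-freeness conclusion is attempted here, consistent with the theorem asserting convergence only on the (possibly bounded) maximal interval of existence of the solution.
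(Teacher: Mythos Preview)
Your overall architecture is exactly that of the paper: the Lyapunov function $V=\sum_{i\geq 2} r_i^\top P r_i$, the mean-value integral to invoke Assumption~\ref{hyp_CMF}, the split $\kappa=\kappa_1+\kappa_2$ with $\kappa_1>\rho/(2\mu)$ absorbing the $\rho$-term via $L_{22}\geq\mu I$, and the closing step $\dot V\leq -qV$ after the CTC are all identical to the paper's proof.

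However, your description of the central algebraic step has a gap. The way you phrase it---apply Young's inequality to $2\kappa\, r_i^\top Q\bigl(\sum_j l_{ij} e_j\bigr)$, then ``trade the resulting squares of $r_i$ back against $\tilde{\mathbf r}^\top(L_{22}\otimes Q)\tilde{\mathbf r}$''---does not produce the essential negative term $-w_i^\top\Theta_i w_i$. If the $\kappa_2$-Laplacian quadratic is spent absorbing $r_i^\top Q r_i$ (via $L_{22}\geq\mu I$ again), nothing is left to generate a $w$-quadratic, and then the CTC bound $\delta_i\leq \sigma_i w_i^\top\Theta_i w_i$ adds a \emph{positive} term to $\dot V$ that you cannot cancel. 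The identity $\sum_j l_{ij} r_j = w_i+\sum_j l_{ij} e_j$ that you invoke relates $w_i$ to the combination $(L\otimes I)r$, not to individual $r_i$, so it cannot convert leftover $r_i^\top Q r_i$ into $w_i^\top Q w_i$.

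The paper's trick is different and should replace your description. First, the cross term is rewritten using the symmetry of $L$ as
\[
r^\top(L\otimes Q)e=\bigl((L\otimes I)r\bigr)^\top(I_N\otimes Q)\,e,
\]
and then the spectral bound $-L\leq -\varepsilon L^\top L$ (valid since $\varepsilon\leq 1/\lambda_{\max}(L)$) turns the $\kappa_2$-quadratic into
\[
-2\kappa_2\,r^\top(L\otimes Q)r \leq -2\kappa_2\varepsilon\,\bigl((L\otimes I)r\bigr)^\top(I_N\otimes Q)\bigl((L\otimes I)r\bigr).
\]
Only now does one substitute $(L\otimes I)r=(L\otimes I)\hat x+(L\otimes I)e$, i.e.\ the $i$-th block equals $w_i+\sum_j l_{ij}e_j$. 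Expanding both expressions yields the term $-2\kappa_2\varepsilon\, w_i^\top Q w_i$ directly, together with $w$--$e$ cross terms and $e$-quadratics; Young's inequality with parameter $b_i$ (and the symmetry identity \eqref{55}) then reshapes these into $e_i^\top S_i e_i$, $|w_i^\top R_i e_i|$ and $-w_i^\top\Theta_i w_i$. Note also that the coefficient you obtain in front of $w_i^\top\Theta_i w_i$ is $-1$, not $-\sigma_i$; the $\sigma_i$ enters only when the CTC is applied, leaving the extra slack $(\sigma_i-1)w_i^\top\Theta_i w_i\leq 0$.
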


\begin{proof}
The proof of Theorem \ref{theorem} is provided in Appendix \ref{proof_th}.
\end{proof}

\begin{remark}
One can notice that the computation of the CTC and the control law by agent $i$ requires the value of $l_{ij} \,\hat{x}_j$ for all $j$. When $j$ is a neighbour of $i$, then $i$ has access to $\hat{x}_j$. When $j$ is not a neighbour of $i$ then $l_{ij} = 0$, which implies $l_{ij} \,\hat{x}_j=0$.
Therefore the CTC \eqref{SequenceofTime2} and the control law are fully distributed.
%
\end{remark}

\begin{remark}
It has to be noticed that this theorem does not exclude the existence of Zeno behaviour which implies that solutions may not be defined for all positive times. In section \ref{no_zeno}, we propose another CTC to exclude Zeno behaviour.  However the asymptotic convergence is lost and only a practical stabilisation i.e. bounded consensus is obtained.
\end{remark}

\subsection{Practical event-triggered Consensus excluding Zeno behaviour}
\label{no_zeno}
 In order to exclude Zeno behaviour, the former CTC \eqref{SequenceofTime2} is modified by introducing a small positive constant to guarantee a positive inter event time. 
 With this CTC, asymptotic consensus will not be ensured anymore, but the deviation from the consensus can be bounded.
\begin{hyp}
\label{hyp_lip} The function $f$ is $k$-Lipschitz uniformly in $\theta$, i.e., for all $x_i$ and $x_j$ in $\RR^n$
\begin{equation}
    \left|f(x_i, \,\theta) - f(x_j, \,\theta) \right| \leq k \left|x_i - x_j \right|\!.
\end{equation}
Moreover we assume the following property on $f$ holds
\begin{equation}
    \left|f(x_i, \,\hat \theta)- f(x_i, \,\theta) \right| \leq \Delta,
\end{equation}
with $(\theta, \, \hat \theta)$ in $\Omega^2$, for all $x_i \in \mathbb R^n$, and $\Delta \in \RR^+$.
\end{hyp}

The following new CTC is introduced to define the sequence of communication instants $t_{i,\,p}$, for all $i = \{1, \, \dots, \, N\}$ as
\begin{align}\label{SequenceofTimeZeno}
        t_{i,\, p+1} \!= \!\inf\!\left\{\!s\!>\!t_{i,\,p} \, | \, \delta_i(s) \!- \! \sigma_i\, w_i(s)^{\!\top}  \Theta_i \,  w_i(s) \!-\! \xi \!> \!0 \right\}\!,
\end{align}
with $0< \sigma_i <1$,  $\xi>0$,
\begin{equation}
     w_i = \sum_{j=1}^N  l_{ij} \left(\hat{x}_i^i - \hat{x}_j^i \right),
\end{equation}
with $ \delta_i$ defined in \eqref{delta} and $ \Theta_i$ defined in \eqref{Theta}.

Using the following theorem, one can define the distributed-event triggered control law ensuring practical consensus of the MAS and absence of Zeno behaviour. 
\begin{theorem}
\label{th_no_zeno}
Suppose Assumptions \ref{hyp_lap},  \ref{hyp_CMF} and \ref{hyp_lip} hold.
Then with the distributed control law  $u_i = \phi_i(\mathbf{\hat{x}}^i)$ for $i = 2,\dots,N$, defined in \eqref{estimator}-\eqref{phi}
with $\kappa = \kappa_1 + \kappa_2$, for any $\kappa_1 > \frac{\rho}{2\,\mu}$ and $\kappa_2 > 0$, $\alpha$ defined in \eqref{alpha}, $\rho$ satisfying \eqref{CMF}, $\mu$ satisfying \eqref{L_22} 
and  $t_{i,\,p}$ defined according to the CTC \eqref{SequenceofTimeZeno}
%
%
%
%
then 
for all $\mathbf x(0)$ in $\RR^{Nn}$ the solution $\mathbf x(t)$  of the system \eqref{dyna} is defined for all positive time instants and
\begin{equation}
    \lim_{t \rightarrow \infty} \left|\mathbf x(t) \right|_{\mathcal D}^2 \leq \frac{N\,\xi}{q\, \lambda_{\min}(P)},
\end{equation}
with $q$ respecting \eqref{CMF}.
Moreover, there is no Zeno behaviour, and the inter event-time of the solution is bounded by $\tau_i$
\begin{align}
    \tau_i &= \frac{1}{k} \log\left( \sqrt{\frac{\xi}{c_1}  +\frac{c_2^2}{4\,c_1^2}}+1 - \frac{c_2}{2\,c_1}\right)\!,
\end{align}
with $k$ the Lipschitz constant of $f$, $c_1 = \left|S_i \right| \nu^2$ and $c_2 =w_{i_{\max}} \left|R_i\right| \nu$,
where $w_{i_{\max}} = \max_t \left| w_i(t)\right|$ and 
$
    \nu = \frac{\kappa \left|B\, B^\top P \right|\left[ w_{i_{\max}}+ \Delta\right] }{k}.
$
\end{theorem}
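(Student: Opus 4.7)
My plan is to piggyback on the Lyapunov analysis of Theorem \ref{theorem} --- which in the appendix already produces a dissipation inequality under CTC \eqref{SequenceofTime2} --- and treat the new scalar $\xi$ in \eqref{SequenceofTimeZeno} as an additive disturbance to that inequality, then argue separately for the inter-event lower bound.

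For the asymptotic bound, I would first observe that by construction of \eqref{SequenceofTimeZeno} we always have $\delta_i(t) - \sigma_i\, w_i(t)^\top \Theta_i\, w_i(t) \leq \xi$ between events, whereas the analogous quantity is bounded by zero under \eqref{SequenceofTime2}. Replaying the bound on $\dot V$ appearing in the proof of Theorem \ref{theorem} verbatim, each agent's absorption of $\delta_i$ by $\sigma_i w_i^\top \Theta_i w_i$ now leaves a residual $\xi$, and summing over $i=1,\dots,N$ yields
\begin{equation}
    \dot V \leq -q\, V + N\, \xi.
\end{equation}
The comparison lemma then gives $V(t) \leq V(0)\, e^{-q\, t} + \frac{N\, \xi}{q}(1 - e^{-q\, t})$, and the lower bound $V \geq \lambda_{\min}(P)\, |\mathbf x|_{\mathcal D}^2$ (which is also used in the proof of Theorem \ref{theorem}) yields the stated ultimate bound. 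Forward completeness will come as a by-product, since on any maximal interval of existence the state stays in a bounded sublevel set of $V$.

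For the no-Zeno statement, I would fix $i$ and a triggering instant $t_{i,p}$. By \eqref{estimatori} the reset forces $e_i(t_{i,p}) = 0$, so $\delta_i(t_{i,p})=0$. Between events the error dynamics read $\dot e_i = f(x_i,\theta) - f(\hat x_i^i, \hat\theta) + B\, u_i$, and splitting $f(x_i,\theta) - f(\hat x_i^i, \hat\theta)$ as $[f(x_i,\theta) - f(x_i,\hat\theta)] + [f(x_i,\hat\theta) - f(\hat x_i^i,\hat\theta)]$, Assumption \ref{hyp_lip} together with the explicit form \eqref{phi}--\eqref{alpha} of the control gives $|\dot e_i| \leq k\, |e_i| + k\,\nu$. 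Grönwall's lemma then yields $|e_i(t)| \leq \nu\,(e^{k\,(t-t_{i,p})} - 1)$. Plugging this into \eqref{delta} and bounding $|w_i^\top R_i\, e_i| \leq w_{i_{\max}}\,|R_i|\,|e_i|$ and $e_i^\top S_i\, e_i \leq |S_i|\,|e_i|^2$ produces $\delta_i(t) \leq c_1\,\eta(t)^2 + c_2\,\eta(t)$ with $\eta(t) = e^{k(t-t_{i,p})}-1$. Since \eqref{SequenceofTimeZeno} cannot fire as long as $\delta_i \leq \xi$, the inter-event time is at least the smallest $\tau$ solving $c_1 \eta^2 + c_2 \eta = \xi$, which after solving the quadratic in $\eta$ and taking logarithms produces exactly the closed form $\tau_i$ in the statement.

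The subtlety I expect to be the main obstacle is untangling the circular dependency among (i) forward completeness of the solution, (ii) boundedness of $w_i$ (needed to define $w_{i_{\max}}$, hence $\nu$, $c_1$, $c_2$), and (iii) the strict positivity of $\tau_i$ (needed to exclude Zeno, hence forward completeness). I would resolve this by a standard continuation argument: on any interval $[0,T)$ on which the solution is defined, the Lyapunov estimate above bounds $V$, hence $\mathbf x$, uniformly in $T$; this gives a uniform bound on $|w_i(t)|$ independent of $T$, which makes $\tau_i$ strictly positive and uniform, so only finitely many events can occur on $[0,T)$ and the solution extends past $T$. Iterating yields existence for all $t \geq 0$ and validates, a posteriori, the use of $w_{i_{\max}} = \sup_t |w_i(t)|$ in the inter-event bound.
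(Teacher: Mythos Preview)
Your proposal is correct and follows essentially the same route as the paper: reuse the Lyapunov inequality from the proof of Theorem~\ref{theorem} with the extra $\xi$ per agent to obtain $\dot V \leq -qV + N\xi$, then bound $|e_i|$ via Gr\"onwall and solve the quadratic $c_1\eta^2 + c_2\eta = \xi$ in $\eta = e^{k\tau}-1$ for the minimum inter-event time. Your continuation argument for the circular dependency among forward completeness, boundedness of $w_i$, and positivity of $\tau_i$ is in fact more explicit than the paper's one-line treatment (which simply asserts that boundedness of $V$ gives boundedness of $r$, and that the CTC then bounds $e$, so $w_{i_{\max}}$ is finite); just note that $V$ bounds $r$ rather than $\mathbf x$ itself, and that the missing bound on $e$ needed for $w_i$ is supplied by the CTC, exactly as the paper does.
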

\begin{proof}
The proof of Theorem \ref{th_no_zeno} is provided in Appendix \ref{proof_th_no_zeno}
\end{proof}

\section{Simulation examples}
\label{result}
Consider a MAS of $N=5$ agents with a communication graph illustrated in Figure \ref{figure_graph}. 
\pagebreak
\tikzstyle{etat} = [draw, circle]
\begin{center}
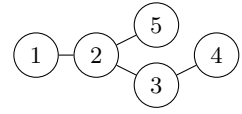

    \centering
    \begin{tikzpicture}[scale = 0.8]
        \node[etat] (1) at (0,0) {$1$};
        \node[etat] (2) at (1,0) {$2$};
        \node[etat] (3) at (2,-0.5) {$3$};
        \node[etat] (5) at (2,0.5) {$5$};
        \node[etat] (4) at (3,0) {$4$};
        
        \draw[-,>=latex] (1) to (2) ;
        \draw[-,>=latex] (2) to (3);
        \draw[-,>=latex] (2) to (5);
        \draw[-,>=latex] (3) to (4);
    \end{tikzpicture} 
    \captionof{figure}{Network Structure}
    \label{figure_graph}
\end{center}
The function $f$ and the matrix $B$ defining the dynamics of the agents are chosen as
\begin{equation}
\label{dynareel}
    f(z, \, \theta) \!=\! \begin{pmatrix}z_2+\theta\,\cos(z_2)\\-z_1+\theta\cos(z_2)+\theta^2\!\cos(z_1)\sin(z_1) \end{pmatrix}\!\!, 
    B = \begin{pmatrix}0\\-1 \end{pmatrix}\!,
\end{equation}
with $z = \left[ z_1 \, z_2 \right]^\top$.
For the simulations, the unknown parameter is set to $\theta = 0.5$ and the initial condition of the MAS is 
$$\mathbf x(0) = \begin{pmatrix}
    (0.95 &  0.63)^\top\\
    (-0.70 & -0.73)^\top \\
    (-0.33& -0.54)^\top\\
   (-0.25 &0.02)^\top \\
   (0.86 & 0.01)^\top
\end{pmatrix}\!,$$

Using \eqref{dynareel} with $\theta=0.5$, a matrix $P$ satisfying condition \eqref{CMF} with constants $\rho = 0.02$, $q=1$ is 
\begin{equation}
    P =
     \begin{pmatrix}
      5& 2 \\ 2 & 1
    \end{pmatrix}\!,
\end{equation}

The other parameters used either in the CTC or control law are chosen as $\kappa_2 = 5$, $ \varepsilon = \frac{1}{\lambda_{\max}(L)}$, $b_i = (5\, l_{ii})^{-1}$, $\kappa_1 = 0.1$, $\sigma _1 = 0.8$, and for all $i \in\{2,\,\dots,\, 5\}$, $\sigma_i = 0.9$. 

A sampling period of $10^{-2}s$ has been chosen for all the numerical simulations. 
Different simulations have been performed to illustrate the resulting performances of the proposed approach in terms of reduction of communication. Analysis of the robustness to the a priori value $\hat{\theta}$ is also presented as well as a comparison of efficiency of the two proposed CTCs \eqref{SequenceofTime2} and \eqref{SequenceofTimeZeno}.

Figure \ref{fig_1} presents the time evolution of the state of each agent of the MAS and the communication instants, with the CTC \eqref{SequenceofTime2} 
using  
$\hat\theta = 0.40$, for which \eqref{CMF} is satisfied with the chosen $P$. The simulation duration $T_{sim}$ is $10 \, s$.


\begin{center}
    \includegraphics[width =\linewidth]{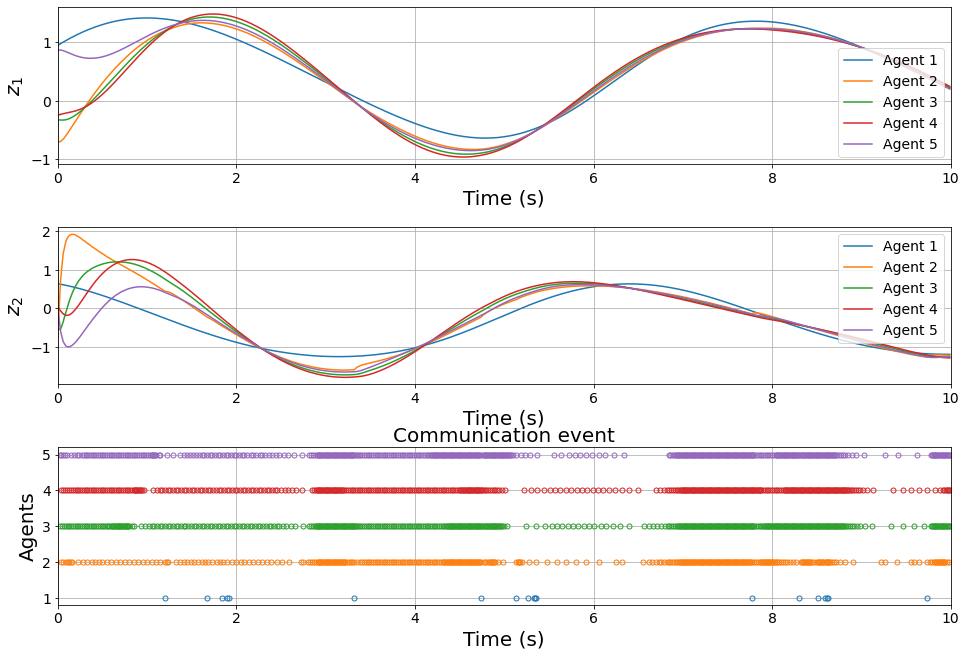}
    \captionof{figure}{\footnotesize Time evolution of the components of state for each agent ({\it top}: first component, {\it middle}: second component), and communication instants for each agent ({\it bottom}). Case with the CTC \eqref{SequenceofTime2}, $\theta=0.50$ and $\hat \theta=0.40$.}
    \label{fig_1}
\end{center}
It can be observed that the consensus is reached with limited communications. The CTC is mostly triggered by the errors due to neglecting the control input $u_i$ in the estimators. Thus, very few communications are triggered by the leader (agent 1) which is not controlled. 

 The results presented in Figure \ref{fig_2} are obtained for $\hat \theta= 0.35$ corresponding to a larger uncertainty but still satisfying \eqref{CMF}. The CTC  \eqref{SequenceofTime2} is also used in this example and other parameters' values remain unchanged. As can be seen, consensus is also reached in this case by the agents with non-periodic communications. 
\begin{center}
    \includegraphics[width =\linewidth]{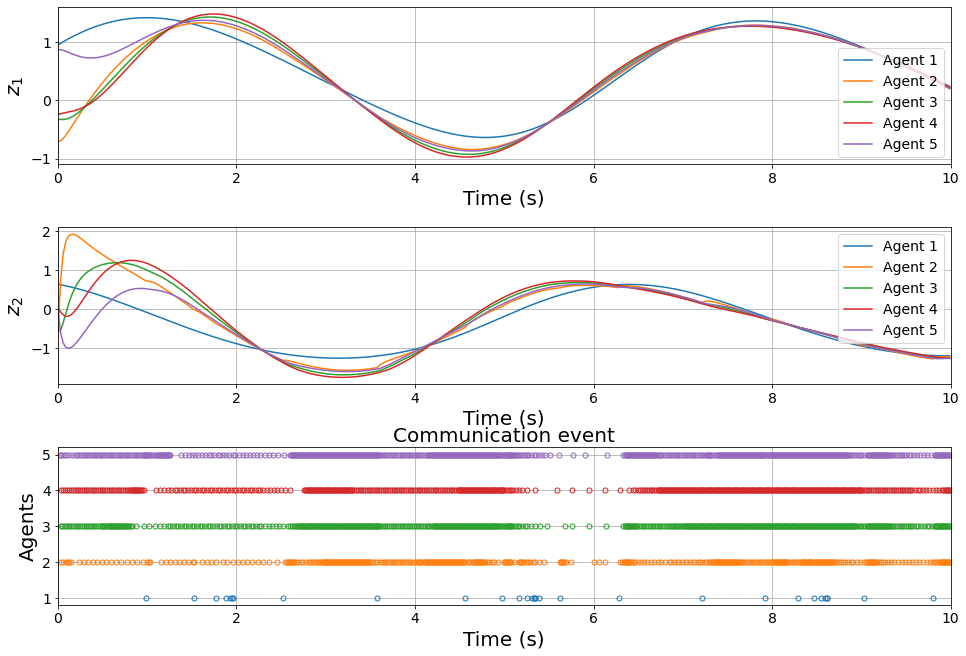}
    \captionof{figure}{\footnotesize Time evolution of the components of state for each agent ({\it top}: first component, {\it middle}: second component), and communication instants for each agent ({\it bottom}). Case with the CTC \eqref{SequenceofTime2}, $\theta= 0.50$ and $\hat \theta= 0.35$.}
\label{fig_2}
\end{center}
In both cases, non-periodic communications are obtained, as triggered by the proposed CTC \eqref{SequenceofTime2}, and convergence to an asymptotic consensus is observed. 
In order to compare the resulting performances, we introduce a performance index $\Gamma$ to qualify the evolution of the consensus error and the reduction of communications from the CTC. $\Gamma$ is defined as
\begin{equation}
    \Gamma = \frac{1}{N}\sum_{k = 0}^K \left[ \sum_{i=1}^N \left|r_i(k)\right|_2 +\chi\, v_i(k)\right] 
\end{equation}
where $K$ is the number of simulated instants, $v_i(k)$ is defined as
\begin{subequations} 
\begin{empheq}[left ={v_i(k) \!=\!\empheqlbrace}]{align}
  &1  &&\text{if the CTC is verified},\\
 &0  &&\text{otherwise}.
\end{empheq}
\end{subequations}
and $\chi$ is a tuning parameter that allows to balance between consensus accuracy and communication reduction.
Comparisons of the performances for the two values of $\hat{\theta}$ are presented in Table \ref{tab}, for $\chi$ equals to $0.1$ and for two values of $T_{sim}$, respectively $10$ and $30$ seconds. As expected, it can be observed that a larger uncertainty ($\hat{\theta}=0.35$) results in some discrepancy of the performances, however limited. 
In order to compare the efficiency of the two CTCs, simulations have been performed with CTC \eqref{SequenceofTimeZeno} and the two values of $\hat{\theta}$.
The initial condition and the other parameters remain unchanged. The parameter $\xi$ has been selected equal to 20. This value has been chosen to illustrate the bounded consensus in the simulation. Note that better performances in terms of consensus accuracy could be obtained for smaller values of $\xi$.

Figure \ref{fig_3} presents the corresponding  time evolution of the state of each agent of the MAS and the communication instants.
\begin{center}
    \includegraphics[width =\linewidth]{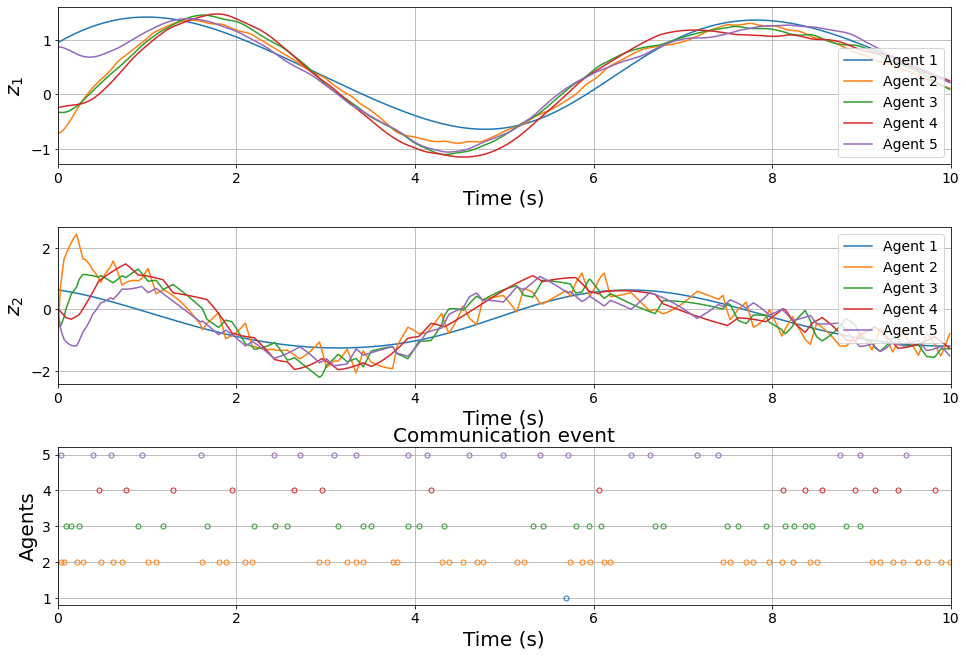}
    \captionof{figure}{\footnotesize Time evolution of the components of state for each agent ({\it top}: first component, {\it middle}: second component), and communication instants for each agent ({\it bottom}). Case with  the CTC \eqref{SequenceofTimeZeno}, $\theta= 0.50$ and $\hat \theta= 0.40$.}
\label{fig_3}
\end{center}
As expected, with this CTC and value of $\xi$, the communications are drastically reduced at the cost of losing asymptotic convergence to the consensus. Bounded variations around the consensus manifold appear clearly on the figure.
The corresponding values of performance index $\Gamma$ for these examples are also presented in Table \ref{tab}.

\begin{table}[h]
    \centering
    \begin{tabular}[H]{ |c || c | c | c | c | }
     \hline
     \multicolumn{5}{|c|}{Values of $\Gamma$} \\
     \hline
     & \multicolumn{2}{|c|}{CTC \eqref{SequenceofTime2}} & \multicolumn{2}{|c|}{CTC \eqref{SequenceofTimeZeno}}\\
     \hline
     $T_{sim}$ & $\hat \theta = 0.40$& $\hat \theta = 0.35$ & $\hat \theta = 0.40$ & $\hat \theta = 0.35$\\
     \hline
     10 s & $387$ & $393$ & $343$ & $395$\\
     \hline
     30 s & $727$ & $764$ & $843$ & $843$\\
     \hline
    \end{tabular}
    \caption{Comparison of $\Gamma$ }
    \label{tab}
\end{table}

From Table \ref{tab}, it can be seen that, for a shorter simulation, the CTC \eqref{SequenceofTimeZeno} is better than the CTC \eqref{SequenceofTime2} according the performance index $\Gamma$. This result is consistent since the  CTC \eqref{SequenceofTimeZeno} triggers less communications than the CTC \eqref{SequenceofTime2}. Nevertheless since the asymptotic consensus is not reached with the CTC \eqref{SequenceofTimeZeno}, for a longer simulation, the CTC \eqref{SequenceofTime2} proves to be more efficient. In order to explicit this phenomena,  the quantity $\frac{1}{N}\sum_{k=1}^K\sum_{i=1}^N \left|r_i(k)\right|_2$, which represents the quality of the consensus and the quantity $\sum_{i=1}^K\sum_{k=1}^N v_i(k)$ which represents the number of communications in the MAS have been evaluated separately. 

 Tables \ref{tab_2} and \ref{tab_3} present the different values of these quantities for all the simulated examples. Clearly, the CTC \eqref{SequenceofTime2} allows to reach the consensus with more accuracy than the CTC \eqref{SequenceofTimeZeno} at the cost of increasing the number of communications.
\begin{table}[h]
    \centering
    \begin{tabular}[H]{ |c || c | c | c | c | }
     \hline
     \multicolumn{5}{|c|}{Values of $\frac{1}{N}\sum_{k=1}^K\sum_{i=1}^N \left|r_i(k)\right|_2$} \\
     \hline
     & \multicolumn{2}{|c|}{CTC \eqref{SequenceofTime2}} & \multicolumn{2}{|c|}{CTC \eqref{SequenceofTimeZeno}}\\
     \hline
     $T_{sim}$ & $\hat \theta = 0.40$& $\hat \theta = 0.35$ & $\hat \theta = 0.40$ & $\hat \theta = 0.35$\\
     \hline
     10 s & $311$ & $300$ & $420$ & $393$\\
     \hline
     30 s & $409$ & $388$ & $829$ & $830$\\
     \hline
    \end{tabular}
    \caption{Comparison of $\frac{1}{N}\sum_{k=1}^K\sum_{i=1}^N \left|r_i(k)\right|_2$ }
    \label{tab_2}
\end{table}

\begin{table}[h]
    \centering
    \begin{tabular}[H]{ |c || c | c | c | c | }
     \hline
     \multicolumn{5}{|c|}{Values of $\sum_{k=1}^K\sum_{i=1}^N v_i(k)$} \\
     \hline
     & \multicolumn{2}{|c|}{CTC \eqref{SequenceofTime2}} & \multicolumn{2}{|c|}{CTC \eqref{SequenceofTimeZeno}}\\
     \hline
     $T_{sim}$ & $\hat \theta = 0.40$& $\hat \theta = 0.35$ & $\hat \theta = 0.40$ & $\hat \theta = 0.35$\\
     \hline
     10 s & $3800$ & $4650$ & $250$ & $200$\\
     \hline
     30 s & $15850$ & $18800$ & $700$ & $650$\\
     \hline
    \end{tabular}
    \caption{Comparison of number of communications in the MAS }
    \label{tab_3}
\end{table}


\pagebreak
\section{Conclusion}
In this paper,  a new distributed event-triggered control law is proposed for the leader-follower consensus problem of a MAS described with uncertain control-affine nonlinear dynamics.
It is based on two novel Communication Triggering Conditions (CTCs): the first one allowing to achieve an asymptotic consensus but without any guarantee on Zeno behaviour and the second one that exclude Zeno behaviour but results only  in a bounded consensus. Simulations have been presented to compare the performances of the two approaches. Robustness to uncertainty is also illustrated.  Future work will consider agents with more generic nonlinear dynamics and state perturbations and MAS composed by heterogeneous agents.



\printbibliography

\section{Appendix}
\subsection{Proof of the Theorem \ref{theorem}}
\label{proof_th}
\begin{proof}
   Since \eqref{estimator} and \eqref{estimatori} follow the same dynamical system, it yields for all $(j,i)$ in $\mathcal E$
$$
\hat x_j^i(t)=\hat x_j^j(t) .
$$
For $i=2,\dots,N$, the $r_i$'s dynamics are given by:
\begin{equation}
    \dot{r_i} = f(r_i + x_1, \, \theta) - f(x_1, \, \theta) - \kappa \sum_{j = 1}^N l_{ij}\,B\,\alpha(x_1 + r_j-e_j).
\end{equation}
Using the fact that for $i=2,\dots,N$:
\begin{equation}
    \sum_{j = 1}^N l_{ij}\,B\,\alpha(x_1) = B\,\alpha(x_1)\sum_{j = 1}^N l_{ij} = 0,
\end{equation}
we can write:
\begin{align}
    \nonumber \dot{r_i} &= f(r_i + x_1, \, \theta) - f(x_1, \, \theta) \\
    \label{r_point}
    &\hspace{1.5cm}- \kappa \sum_{j = 1}^N l_{ij}\,B\left(\alpha(x_1 + r_j-e_j) - \alpha(x_1)\right).
\end{align}

Consider the following 
function which is defined on the time domain of definition of the solution,
\begin{equation}
    V_i(t) = r_i(t)^\top P \, r_i(t)\ , \ i=2,\dots,N.
\end{equation}
Using the relation \eqref{r_point}, the time-derivative of $(V_i)'s$ can be computed as (forgetting the time dependency of the state variables):
\begin{align}
    \nonumber \dot{V}_i &= 2\,r_i^\top P\,\left(f(r_i+x_1, \, \theta)-f(x_1, \, \theta)\right) \\
    &\hspace{1.3cm}- 2\, \kappa \,r_i^\top P\,B \sum_{j=1}^N l_{ij} \left(\alpha(r_j+x_1) - \alpha(x_1) \right)\!.
\end{align}
For fixed $x_1, r_i, \theta, e_i$, let us define $F_i(s): \mathbb R \rightarrow \mathbb R^n$ and $A_i(s): \mathbb R \rightarrow \mathbb R^n$ as: 
\begin{align}
    F_i(s) &= f(x_1 + r_i\,s, \, \theta), \\
    A_i(s) &= \alpha(x_1 + \left(r_i - e_i\right)s).
\end{align}
Note that $\left(f_i(r_i+x_1, \, \theta)-f_i(x_1, \, \theta)\right) = \int_0^1\frac{d\, F_i(s)}{d s} ds$. Hence
\begin{align}
    \nonumber \dot{V}_i &= \!\int_0^1\!\! r_i^\top P\,\frac{d F_i(s)}{d s} \!+\! \frac{d F_i(s)}{d s}^{\!\top}\!\!\! P \, r_i \!-\! 2\, \kappa \,r_i^\top \!PB\! \sum_{j=1}^N \!l_{ij} \frac{d A_j(s)}{d s}ds.
\end{align}
One can notice that:
\begin{align}
    \frac{d\, F_i(s)}{d s} &= \frac{\partial f}{\partial x_i}(x_1 + r_i\,s, \, \theta)\,r_i,\\
    \frac{d\, A_i(s)}{d s} &= \frac{\partial \alpha}{\partial x_i}(x_1 + \left(r_i - e_i\right)s)\,\left(r_i -e_i \right).
\end{align}
Therefore, for $i=2,\dots,N$,
\begin{align}
    \nonumber\dot{V}_i =& \int_0^1 \!r_i^\top \!\!\left(\! P\,\frac{\partial f}{\partial x_i}(x_1 + r_i\,s, \,\theta) + \frac{\partial f}{\partial x_i}(x_1 + r_i\,s, \, \theta)^\top \!P\!\right) r_i \,ds\\
    \nonumber &- 2\, \kappa \,r_i^\top P\,B \sum_{j=1}^N l_{ij} B^\top P \,\left(r_j -e_j \right).
\end{align}
Consider $v$ given as $ v = \left[ (B^\top P \, r_2)^\top, ..., ( B^\top P \, r_N)^\top \right]^\top$.
Define $V(t) = \sum_{i=2}^N V_i(t)$. 
\begin{align}
        \nonumber &\dot V= \int_0^1 \sum_{i=2}^N r_i^\top \He\left( P\,\frac{\partial f}{\partial x_i}(x_1 + r_i\,s, \, \theta)
        \right)\, r_i\, ds\\
        &+ 2\kappa \sum_{i=2}^N\sum_{j=1}^N l_{ij}\, r_i^\top PB B^\top\! P \,e_j- 2 \kappa v^\top (L_{22}\otimes I_{n}) \, v.\label{point_d_arret}
\end{align}
Using \eqref{L_22} and the condition \eqref{CMF} and selecting any $\kappa_1\geq \frac{\rho}{2\,\mu}$, we have:
\begin{align*}
        \nonumber \dot V \leq&  \sum_{i=2}^N -q\, r_i^\top P r_i +2\,\kappa \sum_{i=2}^N\sum_{j=1}^N l_{ij}\, r_i^\top P\,B \, B^\top P \,e_j\\
        \nonumber & - 2\, \kappa_2\sum_{i=2}^N \sum_{j=1}^N l_{ij} \,r_i^\top \, P\, B\, B^\top P\,r_j.
\end{align*}
We denote $\hat x = \text{col}(\hat x_1, \, \dots, \,\hat x_N)$, $r= \text{col}(r_1, \, \dots,\, r_N)$ and $e= \text{col}(e_1, \, \dots,\, e_N)$. Since $r_1 = 0$, one has:
\begin{align}
    \nonumber \sum_{i=2}^N \sum_{j=1}^N l_{ij} \,&r_i^\top\, P\, B\, B^\top P\,r_j =  r^\top \!\left( L \otimes \left( P\, B\, B^\top P\right) \right) r,\\
    \nonumber \sum_{i=2}^N \sum_{j=1}^N l_{ij} \,&r_i^\top\, P\, B\, B^\top P\,e_j =  r^\top \!\left( L \otimes \left( P\, B\, B^\top P\right) \right) e.
\end{align}
Define $\varepsilon \leq \frac{1}{\lambda_{\max}(L)}$. Since $L$ is a symmetric semi-definite postive matrix, one has
\begin{equation}
     - L \leq -\varepsilon\, L^2 \leq -\varepsilon\, L^\top L.
\end{equation}
Moreover $P\, B \, B^\top P$ is a semi-definite positive matrix, so
\begin{equation}
     - L \otimes \left( P\, B\, B^\top P\right) \leq -\varepsilon\, \left( L^\top L \right) \otimes \left( P\, B\, B^\top P\right)\!.
\end{equation}
Thus:
\begin{align}
    \nonumber &-\!\! \sum_{i=2}^N \sum_{j=1}^N l_{ij} \,r_i^\top \! P B B^\top\! P\,r_j
    \leq - \varepsilon \,r^\top \!\!\left(\!\left(L^\top \!L\right) \otimes\left( P B B^\top\! P\right) \!\right) r,\\
    \label{50}
    &\, \leq -\varepsilon \!\sum_{i=1}^N\!\left[\sum_{j=1}^N l_{ij} (r_j-r_i)^\top \!P B B^\top \!P\sum_{k=1}^N  l_{ik} (r_k-r_i) \right]\!\!.
\end{align}
One can notice that
\begin{align}
    \nonumber\left( L \otimes I_N\right) r &= \sum_{j=1}^N l_{ij} \, (r_j-r_i),\\
    \nonumber &= \sum_{j=1}^N l_{ij} \left[(\hat{x}_j - \hat{x}_i)  +(e_j  - e_i)\right],\\
    \label{51}
    &= \left(L \otimes I_n\right)\hat{x} + \left( L \otimes I_n\right) e.
\end{align}
Since $L$ is symmetric and $\sum^N_{j=1} l_{ij} =0$, we have
\begin{align}
    \nonumber r^\top\!\!&\left(L\!\otimes \!\left(PBB^\top \!P\right)\! \right)e = \sum_{i=1}^N\sum_{j=1}^N l_{ij}\, r_i^\top P\,B \, B^\top P \,e_j, \\
    \nonumber &\hspace{1.5cm}= \sum_{j=1}^N\sum_{i=1}^N l_{ji}\, r_i^\top P\,B \, B^\top P \,e_j, \\
    \nonumber &\hspace{1.5cm}= \sum_{i=1}^N\sum_{j=1}^N l_{ij}\, r_j^\top P\,B \, B^\top P \,e_i, \\
    \nonumber &\hspace{1.5cm}= \sum_{i=1}^N\sum_{j=1}^N l_{ij}\, (r_j - r_i)^\top P\,B \, B^\top P \,e_i, \\
    \label{52}
    &\hspace{1.5cm}=  \left(\left( L \otimes I_n\right) r\right)^\top\! \left( I_N \!\otimes \! \left(P BB^\top\! P\right)\right) e.
\end{align}
Therefore, using \eqref{50}, \eqref{51} and \eqref{52}, we have
\begin{align}
    &\nonumber \dot V\leq  \sum_{i=1}^N\left[-  q\, r_i^\top P\, r_i +2\,\kappa \,\sum_{j=1}^N l_{ij} \left( \hat x_j - \hat x_i\right)^\top \! P B B^\top \! P \, e_i \right. \\
    \nonumber &  +2\,\kappa \!\sum_{j=1}^N \! l_{ij} \,e_j^\top  P B B^\top \!\! P \, e_i- 2\,\kappa_2\,\varepsilon \!\sum_{j=1}^N \! l_{ij}\, e_j^{\top}  P B B^\top \!\! P \!\sum_{k=1}^N \! l_{ik} \,e_k\\
    \nonumber&- 2\,\kappa_2\,\varepsilon \left(\sum_{j=1}^N l_{ij} \left( \hat x_j - \hat x_i\right)\right)^{\!\top}\!\!\! P B B^\top \! P \left(\sum_{k=1}^N  l_{ik} \left( \hat x_k - \hat x_i\right)\right) \\
    &\left.- 4\,\kappa_2  \,\varepsilon \left(\sum_{j=1}^N  l_{ij} \left( \hat x_j - \hat x_i\right)\right)^{\!\top}\!\!\!  P B B^\top \! P \sum_{k=1}^N  l_{ik} \,e_k\right]\!.
\end{align}
Since $L$ is symmetric, we have
\begin{align}
\label{55}
    \sum_{i=1}^N\sum_{j=1}^N\left| l_{ij}\right| e_j^\top P\,B \, B^\top P \,e_j = \sum_{i=1}^N 2\,l_{ii}\, e_i^\top P\,B \, B^\top P \,e_i.
\end{align}
Define $ w_i = \sum_{j=1}^N l_{ij} \left(\hat{x}_j - \hat{x}_i \right)$. Note that $ w_1 \neq 0$. Thus using the inequality $\left| x^\top y\right| \leq \frac{ b_i}{2}x^\top x+\frac{1}{2\, b_i}y^\top y$ with $ b_i> 0$, and \eqref{55}, we have
\begin{align}
    \nonumber &\dot V\leq \sum_{i=1}^N\!\left[\vphantom{\sum_j^N} \left|2\,\kappa \, w_i^\top P B B^\top\! P \, e_i\right|- 2\,\kappa_2\, \varepsilon \, w_i^\top P B B^\top\! P \, w_i\right.\\
    \nonumber &+\kappa \left(2\, l_{ii}\, b_i\,e_i^\top P\, B\,B^\top P\, e_i + \sum_{j=1}^N\frac{\left| l_{ij}\right|}{ b_i}\, e_i^\top P\,B \, B^\top\! P \,e_i\right)  \\
    \nonumber& + 2\,\kappa_2\,\varepsilon \!\left(2\, l_{ii}\, b_i\, w_i^\top P B B^\top\! P\,  w_i+ \sum_{j=1}^N\frac{\left| l_{ij}\right|}{ b_i}\, e_i^\top P B B^\top \!P \,e_i\right)\\
    &\left. +\kappa_2\, \varepsilon \,N\!\left(\frac{ b_i}{2}+\frac{1}{2\, b_i}\right)\!\sum_{j=1}^N \!l_{ij}^{\,2}\, e_i^\top PB B^\top\!\! P \,e_i - q\, r_i^\top P\, r_i\right]\!\!.
\end{align}
Define $ M_i= \sum_{j=1}^N {l}_{ij}^{\,2}$. Therefore, we obtain
\begin{align}
    \nonumber \dot V&\leq \sum_{i=1}^N\left[- q\, r_i^\top P\, r_i+  \left|w_i^\top   R_i\, e_i \right|
    -  w_i^\top  \Theta_i \, w_i
+ e_i^\top  S_i \,e_i \right]\!,
\end{align}
with
\begin{align}
     R_i &= 2\,\kappa\,P\, B\, B^\top P,\\
     \Theta_i &= 2\,\kappa_2\, \varepsilon\left(1 -2\, l_{ii}\, b_i \right) P\, B\, B^\top P,\\
    \nonumber  S_i &= \left[\left(2\,\kappa\,l_{ii}\, b_i+\frac{2\,\kappa\, l_{ii}}{ b_i}+\kappa_2\,\varepsilon\left( \frac{4\, l_{ii}}{ b_i} \right.\right.\right.\\
    &\hspace{1.5cm} \left. \left.\left.  - N\, M_i\left(\frac{ b_i}{2} + \frac{1}{2\, b_i} \right)\right)\right)P\,B\,B^\top\! P\right]\!.
\end{align}
and if we choose $0 \leq  b_i \leq \frac{1}{2\,l_{ii}}$, then $ \Theta_i \geq 0$.
Define $ \delta_i =e_i^\top  S_i \,e_i+ \left| w_i^\top   R_i\, e_i\right|$. Therefore
\begin{align}
        \dot V&\leq \sum_{i=1}^N\left[\vphantom{\frac{2}{b_i}} -q\, r_i^\top P\, r_i -   w_i^\top \Theta_i \, w_i + \delta_i \right]\!.
\end{align}
If the CTC \eqref{SequenceofTime2} is respected then the error is reset to zero, that is, $e_i(t_{i,\, k}) = 0 \implies  \delta_i(t_{i,\,k}) = 0$. It yields, 
\begin{align}
     \delta_i \leq  \sigma_i\, w_i^\top  \Theta_i \,  w_i.
\end{align}
Thus
\begin{equation}
\label{point_d_arret_Zeno}
        \dot V \leq \sum_{i=1}^N \left[\vphantom{\frac{2}{b_i}} -q\, r_i^\top P\, r_i + \left( \sigma_i - 1\right) w_i^\top  \Theta_i \,  w_i \right]\!.
\end{equation}
Since $\sigma_i -1 < 0$, and $ \Theta_i \geq 0,$, then $\left( \sigma_i -1\right) w_i^\top  \Theta_i \,  w_i \leq 0$.
Finally we have
\begin{equation}
        \dot V \leq -q\,V.
\end{equation}
Hence, on the time domain of existence of the solution, one gets,
$$
V(t) \leq \exp(-qt) V(0)\ .
$$
The matrix $P$ being positive definite, the result may be easily obtained.
\end{proof}

\subsection{Proof of the Theorem \ref{th_no_zeno}}
\label{proof_th_no_zeno}
\begin{proof}
   Starting from \eqref{point_d_arret_Zeno}, we have
\begin{align}
        \nonumber \dot V&\leq \sum_{i=1}^N\left[\vphantom{\frac{2}{b_i}} - q\, r_i^\top P\, r_i-   w_i^\top  \Theta_i \, w_i + \delta_i \right]\!.
\end{align}
According to the CTC \eqref{SequenceofTimeZeno}, $ \delta_i < \sigma_i \, w_i^\top \Theta \,  w_i + \xi$.
Hence, one has
\begin{align}
        \nonumber \dot V&\leq \sum_{i=1}^N\left[\vphantom{\frac{2}{b_i}} - q\, r_i^\top P\, r_i -  \left(1- \sigma_i\right) w_i^\top  \Theta_i \, w_i + \xi \right]\!,\\
        \label{integrating}
        \dot V&\leq N\,\xi-q \,V.
\end{align}
By integrating \eqref{integrating}, we obtained
\begin{align}
    \nonumber V(t) &\leq e^{-q\,t}\,V(0)  + N\,\xi\int_0^t e^{-q\,\left(t - \tau \right)}d\tau,\\
    &\leq \left(V(0) - \frac{N\,\xi}{q\,}\right)e^{-q\,t} +\frac{N\,\xi}{q}.
\end{align}
We recall that $V = \sum_{i=2}^N r_i ^\top P \,r_i$, thus, as $P$ is definite positive, we have
\begin{align*}
    \nonumber \lambda_\min(P) \left|r_i(t) \right|^2 &\leq r_i(t)^\top P \, r_i(t), \\
    &\leq \left(V(0) - \frac{N\,\xi}{q}\right)e^{-q\,t} +\frac{N\,\xi}{q}.
\end{align*}
Therefore, we have $\lim_{t \rightarrow \infty} \left|r_i(t)\right|^2\leq \frac{\left(N-1 \right)\xi}{q\, \lambda_{\min}(P)}$.\\
Note that, we have
\begin{align}
    \left|\mathbf x \right|_{\mathcal D}^2 = \min_{z \in \RR^n}\sum_{i=1}^N \left|z - x_i \right|^2\leq \sum_{i=1}^N \left|x_1 -x_i \right|^2 = \left|r \right|^2
\end{align}
Thus $\lim_{t \rightarrow \infty}\left|\mathbf x(t) \right|_{\mathcal D}^2 \leq \frac{N\,\xi}{q\, \lambda_{\min}(P)}$.
In this second part, we want to show that the inter event time between two communications can be bounded.
Consider the error's dynamics
\begin{align}
    \nonumber \frac{d}{dt}\left|e_i\right| &= \left(e_i^\top e_i \right)^{-\frac{1}{2}}e_i^\top \dot{e}_i,\\
    \nonumber &= \left(e_i^\top e_i \right)^{-\frac{1}{2}}e_i^\top\!\!\left(f(x_i, \, \theta) - \kappa \,B B^\top\! P \, w_i - f(\hat{x}_i, \, \hat \theta)\right),\\
    \nonumber &\leq \left(e_i^\top e_i \right)^{-\frac{1}{2}}\left|e_i\right|\left|f(x_i, \, \theta) -  f(\hat{x}_i, \, \theta)\right| \\
    \nonumber &\hspace{2cm}+ \left(e_i^\top e_i \right)^{-\frac{1}{2}}\left|e_i\right|\left(\left|\kappa \,B\, B^\top P \,  w_i\right| + \Delta\right)\!,\\
    \label{equadiff_ei}
    &\leq k\left|e_i\right|+ \kappa \,|B\, B^\top P |\, ( w_{i_{\max}} + \Delta).
\end{align}
where $w_{i_{\max}} = \max_t \left| w_i(t)\right|$ and $k$ the Lipschitz constant of $f$.
Define $ w = \text{col}( w_1, \dots, \,  w_N )$. By definition and using \eqref{51} we have $ w = \left(L \otimes I \right) \left( r - e\right)$. According to the first part of the proof, we show that $V$ is bounded so as $\left|r \right|$, thus according to the CTC \eqref{SequenceofTimeZeno}, $\left| e\right|$ is also bounded. To conclude $w_{i_{\max}}$ exists and it is finite.
By integrating \eqref{equadiff_ei} with $\left|e_i(0) \right| = 0$, one obtains
\begin{equation}
    \left|e_i\right| \leq \frac{\kappa \left|B\, B^\top P \right|\left( w_{i_{\max}}+ \Delta\right) }{k} \left[e^{k\,t}-1 \right]\!.
\end{equation}
Define $\nu = \frac{\kappa \left|B B^\top\! P \right|\left( w_{i_{\max}}+ \Delta\right) }{k}$. Thus we can bound the evolution of $\delta_i$ by
\begin{align}
    \nonumber  \delta_i &\leq \left|S_i \right| \nu^2 \left[e^{k\,t}-1 \right]^2 + w_{i_{\max}} \left|R_i\right|\nu\left[e^{k\,t}-1 \right] \!,\\
    &\leq c_1\, e^{2\,k\,t} -\left(2 \,c_1 - c_2\right)\,e^{k\,t} + c_1 - c_2,
\end{align}
with $c_1 = \left|S_i \right| \nu^2$ and $c_2 =w_{i_{\max}} \left|R_i\right| \nu$.

One can notice that the time it takes for $\bar\delta_i$ to grow from $0$ to $ w_i ^\top \Theta_i \,  w_i+ \xi$ is not smaller that the time need for $\bar\delta_i$ to grow from $0$ to $\xi$.
Define by $\tau_i$ the lower bound between two events. It can be obtained by solving the following equation
\begin{align}
\label{equation_tau}
    \nonumber \xi &= c_1\, e^{2\,k\,\tau_i} -\left(2 \,c_1 - c_2\right)\,e^{k\,\tau_i} + c_1 - c_2, \\
    \nonumber \xi &= \left(\sqrt{c_1}\, e^{k\,\tau_i} - \left(\frac{1}{2\,\sqrt{c_1}}\left(2 \,c_1 - c_2\right)\right)\right)^2 + c_1 - c_2\\
    &\quad -\left(\frac{1}{2\,\sqrt{c_1}}\left(2 \,c_1 - c_2\right)\right)^2\!\!.
\end{align}
One can notice that
\begin{equation}
     c_1 - c_2-\left(\frac{1}{2\,\sqrt{c_1}}\left(2 \,c_1 - c_2\right)\right)^2 = -\frac{c_2^2}{4\,c_1}.
\end{equation}
Thus, we have
\begin{align}
    e^{k\,\tau_i} &= \left( \sqrt{\frac{\xi}{c_1}  +\frac{c_2^2}{4\,c_1^2}}+\frac{1}{2\,c_1}\left(2 \,c_1 - c_2\right)\right)\!.
\end{align}
This implies
\begin{align}
    \tau_i &= \frac{1}{k} \log\left( \sqrt{\frac{\xi}{c_1}  +\frac{c_2^2}{4\,c_1^2}}+1 - \frac{c_2}{2\,c_1}\right).
\end{align}
Since $\sqrt{\frac{\xi}{c_1}  +\frac{c_2^2}{4\,c_1^2}} - \frac{c_2}{2\,c_1} >0$, then $\tau_i > 0$.
\end{proof}
\end{document}